\newcommand{\N}{\mathbb{N}}
\newcommand{\Z}{\mathbb{Z}}
\DeclarePairedDelimiter\set\{\}
\DeclarePairedDelimiterX{\setm}[2]{\{}{\}}{#1\,\delimsize\vert\,\mathopen{}#2}
\let\abs\relax
\DeclarePairedDelimiter{\abs}{\lvert}{\rvert}
\newcommand{\gray}[1]{\textcolor{gray}{#1}}
\theoremstyle{definition}
\newtheorem{definition}{Definition}[section]
\newtheorem{conjecture}[definition]{Conjecture}
\theoremstyle{plain}
\newtheorem{theorem}[definition]{Theorem}
\numberwithin{equation}{section}
\def\blfootnote{\xdef\@thefnmark{}\@footnotetext}
\newcommand{\tikzmark}[1]{\tikz[overlay,remember picture] \node (#1) {};}
\newcommand{\DrawBox}[3][]{%
  \tikz[overlay,remember picture]{
    \draw[#1]
    ($(#2)+(-0.20em,0.55em)$) rectangle
    ($(#3)+(0.25em,-0.1em)$);}
}
\newcommand{\DrawLine}[3][]{%
  \tikz[overlay,remember picture]{
    \draw[#1]
    ($(#2)+(0.185em,0.55em)$) --
    ($(#3)+(0.185em,-0.1em)$);}
}
\newcommand{\DrawLineCorr}[3][]{%
  \tikz[overlay,remember picture]{
    \draw[#1]
    ($(#2)+(0.48em,0.55em)$) --
    ($(#3)+(0.185em,-0.1em)$);}
}
\begin{document}

\title{Large Subsets of~$\Z_m^n$\\ without Arithmetic Progressions}
\author{Christian Elsholtz, Benjamin Klahn and Gabriel F.\ Lipnik}
\date{}

\maketitle

\begin{abstract}
  \textbf{\noindent\sffamily Abstract.} For integers $m$ and $n$, we study the problem of finding good lower bounds for the size of progression-free sets in $(\mathbb{Z}_{m}^{n},+)$. Let $r_{k}(\mathbb{Z}_{m}^{n})$ denote the maximal size of a subset of $\mathbb{Z}_{m}^{n}$ without arithmetic progressions of length~$k$ and let $P^{-}(m)$ denote the least prime factor of~$m$. We construct explicit progression-free sets 
  and obtain the following improved lower bounds for $r_{k}(\mathbb{Z}_{m}^{n})$:
  \begin{itemize}
      \item If $k\geq 5$ is odd and $P^{-}(m)\geq (k+2)/2$, then 
  \[r_k(\Z_m^n) \gg_{m,k} \frac{\bigl\lfloor \frac{k-1}{k+1}m  +1\bigr\rfloor^{n}}{n^{\lfloor \frac{k-1}{k+1}m \rfloor/2}}. \]
  \item If $k\geq 4$ is even, $P^{-}(m) \geq k$ and $m \equiv -1 \bmod k$, then
  \[r_{k}(\mathbb{Z}_{m}^{n}) \gg_{m,k} \frac{\bigl\lfloor \frac{k-2}{k}m + 2\bigr\rfloor^{n}}{n^{\lfloor \frac{k-2}{k}m + 1\rfloor/2}}.\]
  \end{itemize} 
  Moreover,
  we give some further improved lower bounds on $r_k(\Z_p^n)$ for primes $p \leq 31$ and progression lengths $4 \leq k \leq 8$.
\end{abstract}

\vspace{-1em}
\blfootnote{
  \begin{description}
    \vspace*{-1em}
    \item [Christian Elsholtz]
          \texttt{elsholtz@math.tugraz.at},
          \url{www.math.tugraz.at/~elsholtz},
          Graz University of Technology, Austria
    \item [Benjamin Klahn]
          \href{mailto:klahn@math.tugraz.at}{\texttt{klahn@math.tugraz.at}},
          Graz University of Technology, Austria
    \item [Gabriel F.\ Lipnik]
          \href{mailto:math@gabriellipnik.at}{\texttt{math@gabriellipnik.at}}, \url{www.gabriellipnik.at},
          Graz University of Technology, Austria
    \item [2020 Mathematics Subject Classification] 11B25, 05D05, 20K01
    \item [Key words and phrases] arithmetic progressions, progression-free sets, Behrend-type construction
  \end{description}}

\newpage

\section{Introduction and Main Result}

In additive combinatorics, it has been of great interest to find large subsets of $\Z_m^n \coloneqq (\Z/m\Z)^n$ without arithmetic progressions of a given length~$k$. We let $r_{k}(\mathbb{Z}_{m}^{n})$ denote the maximal size of a subset of $\mathbb{Z}_{m}^{n}$ without arithmetic progressions of length~$k$ and $P^{-}(m)$ the least prime factor of~$m$. The case $n=1$ and $k=3$ is closely related to progression-free sets in the integers; see the results by Behrend~\cite{behrend:1946:behrend-construction}, Roth~\cite{Roth:1953} and Szemer\'{e}di~\cite{szemeredi:1975:integer-sets-without-arithmetic-progressions}. The case $k=3$ and $m$ prime is strongly connected to the well-studied case of capsets~\cite{edel:2004:product-caps, ellenberg-gijswijt:2017:subsets-without-3-term-arithmetic-progression, elsholtz-lipnik:2020:caps}.
Nevertheless, there is not much literature on lower bounds on these progression-free sets, not even for primes~$m$ and general progression length~$k$. For $k=3$ the best lower bound is due to Elsholtz and Pach \cite{elsholtz-pach:2019:progression-free-sets-caps} who
adapted Behrend's method to higher dimensions, showing that there is a positive constant $C_{m}$ such that
        \begin{equation*}
          r_{3}(\mathbb{Z}_{m}^{n}) \geq
          \begin{cases}
            \frac{C_{m}}{\sqrt{n}}\bigl( \frac{m+1}{2} \bigr)^{n} & \text{if $m$ is odd,}  \\
            \frac{C_{m}}{\sqrt{n}}\bigl( \frac{m+2}{2} \bigr)^{n} & \text{if $m$ is even.}
          \end{cases}
        \end{equation*}
For $k \geq 4$, the best lower bound is due to Lin and Wolf \cite{lin-wolf:2010:sets-without-k-term-progresions}, who showed that if~$p$ is a prime and $k\leq p$, then we have
\begin{equation*}
    r_{k}(\Z_{p}^{n}) \geq \bigl(p^{2(k-1)} + p^{k-1} - 1\bigr)^{\frac{n}{2k}}.
\end{equation*}

In this paper, we adapt and extend the method of Elsholtz and Pach \cite{elsholtz-pach:2019:progression-free-sets-caps} to improve the lower bound of Lin and Wolf. We prove the following results.

\begin{theorem}
\label{thm:main-1}
    Let $m$ be an integer and let $k \geq 5$ be an odd integer. 
    Let $P^{-}(m)$ denote the least prime factor of $m$.
    If $P^{-}(m)\geq (k+2)/2$, then the following estimate holds:
    \begin{equation*}
        r_{k}(\mathbb{Z}_{m}^{n}) \gg_{m,k} \frac{\bigl\lfloor \frac{k-1}{k+1}m +1\bigr\rfloor^{n}}{
        n^{\lfloor \frac{k-1}{k+1}m \rfloor/2}}.
    \end{equation*}
\end{theorem}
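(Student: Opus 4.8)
The plan is to adapt the high-dimensional Behrend-type construction of Elsholtz and Pach to longer progressions. Set $d := \lfloor \frac{k-1}{k+1} m \rfloor$, so that $d+1 = \lfloor \frac{k-1}{k+1} m + 1 \rfloor$, and work inside the box $B := \{0,1,\dots,d\}^n \subseteq \Z_m^n$, where each coordinate is taken with representative in $\{0,\dots,d\}$. For $x \in B$ and a digit $j \in \{0,\dots,d\}$ write $N_j(x) := \#\{i : x_i = j\}$; I call $(N_0(x),\dots,N_d(x))$ the \emph{digit type} of $x$. I would fix a single digit type $\tau = (a_0,\dots,a_d)$ with $\sum_j a_j = n$ that is as balanced as possible, each $a_j$ being $\lfloor n/(d+1)\rfloor$ or $\lceil n/(d+1)\rceil$, and let $S$ be the set of all $x \in B$ of digit type $\tau$.

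The cardinality of $S$ is the multinomial coefficient $\binom{n}{a_0,\dots,a_d}$. Since $d$ depends only on $m$ and $k$, Stirling's formula (equivalently, the local limit estimate for the balanced multinomial distribution) gives $\abs{S} \gg_{m,k} (d+1)^n / n^{d/2}$, which is precisely the claimed bound $\lfloor \frac{k-1}{k+1} m + 1\rfloor^n \big/ n^{\lfloor \frac{k-1}{k+1} m\rfloor/2}$. It then remains to prove that $S$ contains no arithmetic progression of length $k$.

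Suppose $x^{(0)}, x^{(1)}, \dots, x^{(k-1)}$ were a $k$-term progression in $S$ with common difference $v \in \Z_m^n$, so that $x^{(t)} = x^{(0)} + t v$ coordinatewise. The decisive feature of $S$ is that all its points share the digit type $\tau$: hence for every $g \colon \Z_m \to \R$ the sum $\sum_i g(x_i^{(t)})$ depends on $\tau$ alone, not on $t$. Applying this to $g(y)=y$ and $g(y)=y^2$ (evaluated on the representatives in $\{0,\dots,d\}$) shows that $\sum_i x_i^{(t)}$ and $\sum_i (x_i^{(t)})^2$ are constant along the progression. As in the case $k=3$ of Elsholtz--Pach, once each coordinate sequence $t \mapsto x_i^{(t)}$ is a genuine, unwrapped integer progression $x_i^{(0)} + t v_i^{*}$ with integer step $v_i^{*}$, the constancy of $\sum_i (x_i^{(t)})^2 = \sum_i (x_i^{(0)})^2 + 2t\sum_i x_i^{(0)}v_i^{*} + t^2 \sum_i (v_i^{*})^2$ (for $k\ge 3$ values of $t$) forces $\sum_i (v_i^{*})^2 = 0$, hence $v=0$ and the progression is trivial.

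The main obstacle is exactly the parenthetical hypothesis just used: for $k \ge 5$ the window $\{0,\dots,d\}$ has length $d > m/2$, so an individual coordinate progression may wrap around the modulus, and then $x_i^{(t)}$ is not a linear function of $t$ on the representatives. The heart of the proof is therefore a number-theoretic analysis of the $k$-term progressions contained in the interval $\{0,\dots,d\} \subseteq \Z_m$, and this is where both the ratio $\frac{k-1}{k+1}$ and the hypothesis $P^{-}(m) \ge (k+2)/2$ enter. The extremal wrapping progression has common difference close to $\frac{2m}{k+1} = \frac{m}{(k+1)/2}$; its $k$ terms cluster on the $(k+1)/2$ points $\{0, \tfrac{2m}{k+1}, \tfrac{4m}{k+1}, \dots\}$ and just fill an interval of length $\frac{k-1}{k+1} m$, so taking $d = \lfloor \frac{k-1}{k+1} m\rfloor$ places the construction exactly below this threshold, while $P^{-}(m) \ge (k+2)/2$ forces $m$ to be coprime to each of $2,3,\dots,(k+1)/2$ and thereby prevents these resonant differences from being attained. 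I expect the delicate point to be the sub-resonant wrapping progressions, which can still occur in a single coordinate (for instance with difference near $m/2$): one must show they cannot be assembled across coordinates consistently with the fixed digit type $\tau$. Equivalently, the constancy of $\sum_i g(x_i^{(t)})$ for \emph{all} $g$—not merely the quadratic moment—together with the confinement to $\{0,\dots,d\}$ should force every wrap to cancel, leaving only $v=0$. Making this cancellation precise, presumably through the Fourier coefficients $\sum_i \omega^{\xi x_i^{(t)}}$ of the digit distribution (with $\omega = e^{2\pi i/m}$) and the primality hypothesis, is the crux of the argument.
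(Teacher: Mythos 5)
Your construction and counting steps coincide with the paper's: the same digit set $D=\{0,1,\dots,d\}$ with $d=\lfloor\frac{k-1}{k+1}m\rfloor$, the same fixed-digit-type set $S$, the same Stirling estimate $\abs{S}\gg_{m,k}(d+1)^n/n^{d/2}$, and a workable device (near-balanced types, or zero-padding as in the paper) for $n$ not divisible by $\abs{D}$. The genuine gap is the one you name yourself at the end: the entire content of the theorem is that $S$ contains no $k$-term progression, and for that you only give the non-wrapping quadratic-moment argument plus the hope that wrapping can be handled ``presumably through the Fourier coefficients.'' This cannot be waved away, and your worry is well founded: wrapping progressions inside $[0,d]$ really do exist in a single coordinate. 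For example, with $m=11$, $k=5$ (so $d=7$), the progression $2,7,1,6,0$ with common difference $5$ lies entirely in $\{0,\dots,7\}$. Consequently no per-coordinate argument, and no argument using only the first two moments of the digit distribution, can succeed; the equal-occurrence structure across coordinates must be used in an essential way, and your sketch does not say how. As written, the proposal proves only the size bound, not progression-freeness.

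For comparison, the paper closes exactly this gap with a short elementary pigeonhole argument rather than Fourier analysis. Write $\ell=(k-1)/2$ and suppose $S(D,n)$ contains a non-constant progression $\mathbf{v}_1,\dots,\mathbf{v}_k$; each coordinate then carries a progression in $\Z_m$. Let $d^*$ be the largest digit occurring in some non-constant coordinate progression. Since every digit occurs equally often in each $\mathbf{v}_i$, one can choose a coordinate $j_0$ whose progression is non-constant and has $d^*$ in the \emph{middle} position, $\mathbf{v}_{\ell+1}^{(j_0)}=d^*$: otherwise the number of occurrences of $d^*$ in $\mathbf{v}_{\ell+1}$ (only from constant coordinates) would be strictly smaller than its number of occurrences in some other $\mathbf{v}_i$, contradicting equality of counts. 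Because $P^{-}(m)\geq\ell+2$, the terms $\mathbf{v}_1^{(j_0)},\dots,\mathbf{v}_{\ell+1}^{(j_0)}$ are pairwise distinct, and by maximality of $d^*$ they all lie in $[0,d^*]$. The pigeonhole principle yields indices $i>i'$ in $\{1,\dots,\ell+1\}$ with $0<\mathbf{v}_i^{(j_0)}-\mathbf{v}_{i'}^{(j_0)}\leq d^*/\ell<m/(\ell+1)$, the last inequality being strict because $(\ell+1)\nmid m$. Stepping from the middle term by $i-i'$ (a legal index since $\ell+1+i-i'\leq k$) then produces a term equal to $d^*$ plus a positive quantity less than $m/(\ell+1)$, hence lying in $\{d^*+1,\dots,m-1\}$, which contradicts either the maximality of $d^*$ or membership in $D$. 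This combinatorial use of the fixed digit type --- placing the extremal digit at the centre of the progression --- is precisely the idea your proposal is missing, and it is what the hypotheses $d=\lfloor\frac{k-1}{k+1}m\rfloor$ and $P^{-}(m)\geq(k+2)/2$ are actually calibrated for.
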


\begin{theorem}
\label{thm:main-2}
 Let $k \geq 4$ be an even integer. Let $m \equiv -1 \bmod k$ and assume that $P^{-}(m) \geq k$, then we have
\[
r_{k}(\mathbb{Z}_{m}^{n}) \gg_{m,k} \frac{\bigl\lfloor \frac{k-2}{k}m  + 2\bigr\rfloor^{n}}{n^{\lfloor \frac{k-2}{k}m + 1\rfloor/2}}.  
\]  
\end{theorem}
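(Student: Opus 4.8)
The plan is to extend the Behrend-type construction of Elsholtz and Pach from a single quadratic (``spherical'') constraint to a full family of power-sum constraints, one for each admissible degree. Writing $m = ck-1$ with $c \coloneqq (m+1)/k \in \Z$ (which is where the hypothesis $m \equiv -1 \bmod k$ first enters), set
\[ L \coloneqq \left\lfloor \tfrac{k-2}{k}m + 2\right\rfloor = (k-2)c + 1 \]
and take the digit interval $D \coloneqq \{0,1,\ldots,L-1\}$, viewed inside $\Z_m$. For each composition $\mathbf{t} = (t_0,\ldots,t_{L-1})$ of $n$, let $A_{\mathbf t} \subseteq D^n$ be the set of vectors having exactly $t_v$ coordinates equal to $v$ for every $v \in D$. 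Equivalently, $A_{\mathbf t}$ is a common level set of the $L-1$ integer-valued power-sum maps $x \mapsto \sum_{i=1}^n x_i^{p}$ for $p = 1,\ldots,L-1$: the associated Vandermonde system is invertible, so fixing these power sums (together with $\sum_v t_v = n$) is the same as fixing all digit multiplicities $t_v$. I would prove Theorem~\ref{thm:main-2} by establishing two things, namely that every $A_{\mathbf t}$ is free of nontrivial $k$-term progressions, and that the largest $A_{\mathbf t}$ has the size claimed in the statement.

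The size estimate is the routine half. Since $\sum_{\mathbf t}\lvert A_{\mathbf t}\rvert = \lvert D^n\rvert = L^n$ is spread over $O(n^{L-1})$ compositions, a crude pigeonhole yields only $L^n/n^{L-1}$; to obtain the exponent $(L-1)/2$ one uses the concentration of the multinomial distribution. Concretely, for the near-balanced composition $t_v \approx n/L$ the multinomial coefficient satisfies
\[ \binom{n}{t_0,\ldots,t_{L-1}} \gg_{L} \frac{L^n}{n^{(L-1)/2}} \]
by Stirling's formula, equivalently by the local central limit theorem for $L$ equiprobable cells. Taking $\mathbf t$ to be this maximiser gives $\lvert A_{\mathbf t}\rvert \gg_{m,k} L^n/n^{(L-1)/2}$, and since $L = \lfloor\tfrac{k-2}{k}m+2\rfloor$ and $L-1 = \lfloor\tfrac{k-2}{k}m+1\rfloor$, this is exactly the bound asserted in Theorem~\ref{thm:main-2}.

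The substance of the proof is showing that $A_{\mathbf t}$ contains no nontrivial $k$-term progression, \emph{uniformly} in $\mathbf t$ (so that the maximiser above is admissible). Suppose $x^{(j)} = a + jd$ for $0 \le j \le k-1$ all lie in $A_{\mathbf t}$ with $d \ne 0$. Since $P^-(m) \ge k$, every element of $\{1,\ldots,k-1\}$ is a unit modulo $m$, so in each coordinate $i$ with $d_i \ne 0$ the values $a_i + j d_i$ are pairwise distinct; thus the terms $x^{(j)}$ are genuinely distinct and nontriviality means exactly $d \ne 0$. Now fix a coordinate and read its entries as integers in $[0,L-1]$. If $d_i \ne 0$, consecutive entries differ by $\delta_i$ or $\delta_i - m$, where $\delta_i \in \{1,\ldots,m-1\}$ is the least positive residue of $d_i$; call a step a \emph{wrap} when the difference is $\delta_i - m$, so that $x^{(j)}_i = x^{(0)}_i + j\delta_i - m\,w_{i,j}$ with $w_{i,j}$ the number of wraps in the first $j$ steps. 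Imposing that the first power sum $\sum_i x_i$ be constant along the progression gives $j\sum_i\delta_i = m\sum_i w_{i,j}$ for all $j$, so the total number of wraps up to step $j$ equals $jS/m$ with $S \coloneqq \sum_i \delta_i$ a multiple of $m$; in particular the wrap count grows linearly in $j$. The even length $k$ then lets me pair the terms as $(x^{(j)},x^{(k-1-j)})$, whose coordinatewise sums are congruent to the common value $2a_i+(k-1)d_i$; combining this symmetric pairing with the constancy of the higher power sums $\sum_i x_i^p$ (equivalently, the Vandermonde rigidity of the digit type) is designed to force every $\delta_i$ to vanish, contradicting $d\ne 0$.

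The main obstacle is precisely this last step: controlling the interaction between wraparound and the power-sum constraints. In the wrap-free situation, constancy of the single quadratic power sum already yields $\sum_i \delta_i^2 = 0$ and hence $d=0$, reproducing Behrend's argument verbatim; but the presence of wraps makes each coordinate sequence only piecewise-arithmetic, so no single power sum suffices and the full family of $L-1$ constraints is genuinely needed. I expect the heart of the argument to be a careful enumeration of the admissible wrap patterns allowed by the arc $D$ (which is shorter than the full cycle $\Z_m$ by exactly $2c-2$ points), in which the hypothesis $m \equiv -1 \bmod k$ — fixing the fractional part of $\tfrac{k-2}{k}m$ and hence the exact values of $L$ and of the gap $2c-2$ — together with $P^-(m)\ge k$ — guaranteeing distinctness of terms and ruling out short periods from zero-divisor differences — conspire to leave only the trivial solution.
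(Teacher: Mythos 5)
Your counting half is correct and is exactly the paper's own estimate \eqref{eq:size-of-S}: the (near-)balanced digit-count class has size $\gg_{m,k} L^{n}/n^{(L-1)/2}$, and your arithmetic $L=\bigl\lfloor \frac{k-2}{k}m+2\bigr\rfloor=(k-2)c+1$ with $c=(m+1)/k$ is right; your near-balanced-composition device even handles the divisibility issue that the paper treats by zero-padding. But the theorem stands or falls with your claim that the class $A_{\mathbf{t}}$ over the digit \emph{interval} $D=\{0,1,\ldots,(k-2)c\}$ contains no non-trivial $k$-term progression, and you never prove this: your wrap-counting/power-sum sketch ends by conceding that the decisive step (``a careful enumeration of the admissible wrap patterns'') is still to be found. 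That step is the entire content of the theorem, so this is a genuine gap, not a loose end.

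Moreover, the gap sits exactly where your construction deviates from the paper's, and the deviation is what makes your version (at best) much harder. The paper does not use a full interval: Theorem~\ref{keven} takes $D=\bigl\{0,1,\ldots,\bigl\lfloor\frac{(k-2)m}{k}\bigr\rfloor\bigr\}\cup\{h\}$ with the \emph{isolated} digit $h=\frac{(k-1)m-1}{k}$ --- the same cardinality $(k-2)c+1$ as your interval, but a different set. Its proof is a short pigeonhole in two cases: (i) if no non-constant coordinate progression involves $h$, all such progressions live in the interval part and are killed by Theorem~\ref{kodd} applied with the odd length $k-1$ (note $\frac{k-2}{k}=\frac{(k-1)-1}{(k-1)+1}$); (ii) if some non-constant coordinate progression involves $h$, the equal-multiplicity constraints let one place $h$ in the first position, and then the remaining $k-1$ terms are distinct points of the interval, two of which lie at circular distance $<m/k$, forcing some term at distance $<m/k$ from $h$ --- impossible, since $h$ sits at distance exactly $\frac{m+1}{k}$ from every other element of $D$ (this is where $m\equiv -1 \bmod k$ really enters). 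For your interval this mechanism collapses: the top digit $(k-2)c$ has a neighbour at distance $1$, and it exceeds the threshold $\frac{(k-2)m}{k}$ up to which the pigeonhole argument can tolerate a largest digit, so neither case applies and no isolated digit exists. Indeed, admissibility of the full interval $[0,(k-2)c]$ is confirmed by the paper's computations only for a few small pairs (e.g.\ $[0,6]$ for $p=11$, $k=4$, or $[0,8]$ for $p=11$, $k=6$) and is unknown in general --- compare the paper's Conjecture~2.4, a similarly borderline extension the authors leave open. To repair your write-up along the paper's lines, replace your top digit by the isolated digit $h$ and run the two-step pigeonhole above; as written, your proposal does not prove the theorem.
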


Note that when $m = p > k$ is a prime, then Theorem \ref{thm:main-1} and Theorem \ref{thm:main-2} improve the base $p^{(k-1)/k}$ of Lin and Wolf~\cite{lin-wolf:2010:sets-without-k-term-progresions} to $\alpha_{k}p$ for some $\alpha_{k}>0$.

Moreover, our main concern in the above two theorems has been to increase the exponential numerator. It seems possible that the methods of~\cite{elsholtz-lipnik:2020:caps, elsholtz-pach:2019:progression-free-sets-caps} could also improve the polynomial denominator.

\section{Construction of Large Sets without Arithmetic Progressions}
\label{sec:results}

The work of Elsholtz and Pach~\cite{elsholtz-pach:2019:progression-free-sets-caps} suggests that for the construction of large subsets of $\mathbb{Z}_{m}^{n}$ without arithmetic progressions of length~$k$, it is a good idea to consider vectors whose entries only take values from a  prescribed set of digits. To be more precise, we consider the following sets.
\begin{definition}
  Let $D = \{d_1, \ldots, d_{\abs{D}}\} \subseteq \mathbb{Z}_m$ be a set of digits and let $n$ be an integer with $|D| \mid n$. Then we define
  \begin{equation*}
  S(D,n) \coloneqq \bigl\{ (v_{1},\ldots,v_{n}) \in \mathbb{Z}_m^{n} \mid \forall i \leq \abs{D}\colon v_{j}=d_{i}~\text{for}~n/|D|~\text{values of } j \bigr\}.
\end{equation*}
\end{definition}

Thus, $S(D,n)$ is the set of $n$-dimensional vectors for which every digit of $D$ occurs the same number of times.
The task is then to construct ``good'' sets $D \subseteq \mathbb{Z}_{m}$ such that the set $S(D,n)$ does not contain an arithmetic progression of a given length.
If~$S(D,n)$ does not contain an arithmetic progression, then we say that $D$ is \emph{admissible}.

By Stirling's formula, one can give the asymptotic lower bound
\begin{equation} 
    \label{eq:size-of-S}
  \abs{S(D,n)} = \binom{n}{\frac{n}{\abs{D}}, \ldots, \frac{n}{\abs{D}}} \gg \frac{\abs{D}^{n}}{n^{(\abs{D}-1)/2}}  
\end{equation}
as $n\to\infty$.

It remains to find large digit sets~$D$ such that~$S(D,n)$ is progression-free. In~\cite{elsholtz-pach:2019:progression-free-sets-caps} it has been shown that one can take $D=\{0, \ldots , (p-1)/2\}$
of size $\abs{D}=(p+1)/2$, without having an arithmetic progression of length $3$ in~$S(D,n)$. For odd $k \geq 5$ we shall see that we can extend this interval without having arithmetic progressions in $S(D,n)$ of length $k$.

\begin{theorem} \label{kodd}
  Let $m$ be an integer and let the progression length $k\geq 5$ be odd. If $P^{-}(m) \geq (k+2)/2$ and $n$ is an integer divisible by $\lfloor \frac{k-1}{k+1}m \rfloor +1$, then the set $S(D, n)$ with
  \begin{equation*}
  D=\set[\Big]{0,1,\ldots,\Bigl\lfloor \frac{k-1}{k+1}m \Bigr\rfloor}
  \end{equation*}
  does not contain any arithmetic progression of length $k$.
\end{theorem}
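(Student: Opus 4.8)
The plan is to argue by contradiction. Suppose $S(D,n)$ contained an arithmetic progression $v^{(0)},\dots,v^{(k-1)}$ of length $k$ with common difference $d\neq 0$, where $v^{(t)}=v^{(0)}+t\,d$ in $\Z_m^n$. For each coordinate $j$ I would pass to the integer representative $y^{(t)}_j\in\{0,1,\dots,L\}$ with $L=\lfloor\frac{k-1}{k+1}m\rfloor$, so that $y^{(t)}_j\equiv v^{(t)}_j\pmod m$. The engine of the whole argument is the defining property of $S(D,n)$: every digit $0,1,\dots,L$ occurs exactly $n/(L+1)$ times in each $v^{(t)}$, so the multiset $\{y^{(t)}_j\}_{j\le n}$ is \emph{independent of $t$}. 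In particular every power sum $\sum_j (y^{(t)}_j)^r$, and especially the second moment $\sum_j (y^{(t)}_j)^2$, takes the same value for all $t$.

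The blueprint is the case $k=3$ of Elsholtz and Pach, where $L=(m-1)/2$ and the digit interval is at most half of $\Z_m$: there the relation $v^{(0)}_j+v^{(2)}_j\equiv 2v^{(1)}_j\pmod m$ lifts to a genuine integer identity (both sides lie in $[0,m-1]$), after which convexity of $x\mapsto x^2$ together with equal second moments forces $v^{(0)}=v^{(2)}$ and hence $d=0$. For odd $k\geq 5$ I would exploit the central symmetry: writing $c=(k-1)/2$, each triple $(v^{(c-s)},v^{(c)},v^{(c+s)})$ with $1\le s\le c$ is itself a three-term progression, giving the coordinatewise congruences $y^{(c-s)}_j+y^{(c+s)}_j\equiv 2\,y^{(c)}_j\pmod m$. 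The genuinely new difficulty is that now $\frac{k-1}{k+1}>\frac12$, so the digit interval is \emph{longer} than half of $\Z_m$ and a single coordinate's progression may really wrap around: writing $y^{(t+1)}_j-y^{(t)}_j=d_j-m\,\epsilon^{(t)}_j$ with $\epsilon^{(t)}_j\in\{0,1\}$, one checks that the triple relation lifts to an integer identity (no error term $\pm m$) precisely when no wrap straddles it, i.e.\ when $\epsilon^{(c-s)}_j,\dots,\epsilon^{(c+s-1)}_j$ are suitably balanced.

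The crux is therefore a wraparound-control lemma, and this is the step I expect to be the main obstacle. I would analyze one coordinate's progression as $k$ equally spaced points on the cycle $\Z_m$ lying inside an arc of length $L+1$, and bound the number of wrap-steps against the number of ordinary steps. The bound $L\leq\frac{k-1}{k+1}m$ is exactly the threshold below which a ``balanced'' wrapping progression of length $k$ no longer fits into so short an arc, and the hypothesis $P^{-}(m)\geq (k+2)/2$ is what excludes the degenerate progressions in which $\gcd(d_j,m)>1$ produces a short orbit or an anomalous wrap pattern; I expect the oddness of $k$ and the precise constants to be consumed here. Concretely I would first attempt to show that for a suitable choice of $s$ the symmetric relation lifts to an integer identity in \emph{every} coordinate simultaneously; since individual coordinates can oscillate, the fallback is a global counting argument showing that the wrap error terms $\eta_{s,j}\in\{-1,0,1\}$ (defined by $y^{(c-s)}_j+y^{(c+s)}_j=2y^{(c)}_j+m\,\eta_{s,j}$), which must satisfy $\sum_j\eta_{s,j}=0$ by the fixed first moment, cannot be arranged consistently with the fixed full multiset.

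Granting the wrap control, the finish is convexity. For a symmetric relation that lifts, $\sum_j (y^{(c-s)}_j)^2+\sum_j (y^{(c+s)}_j)^2\ge \tfrac12\sum_j\bigl(y^{(c-s)}_j+y^{(c+s)}_j\bigr)^2=2\sum_j (y^{(c)}_j)^2$, with equality coordinatewise iff $y^{(c-s)}_j=y^{(c+s)}_j$. Since all three second moments equal the common value $\sum_j (y^{(0)}_j)^2$, equality is forced throughout, whence $v^{(c-s)}=v^{(c+s)}$, i.e.\ $2s\,d\equiv 0$ in $\Z_m^n$. Taking the innermost usable relation and using that $P^{-}(m)\geq (k+2)/2>2$ makes the small multiplier invertible modulo $m$, I obtain $d=0$, contradicting nontriviality of the progression. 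Hence $S(D,n)$ with $D=\{0,1,\dots,\lfloor\frac{k-1}{k+1}m\rfloor\}$ contains no arithmetic progression of length $k$.
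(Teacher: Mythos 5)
Your proposal is not a complete proof: everything hinges on the ``wraparound-control lemma'' that you yourself flag as the main obstacle, and that lemma is precisely the hard part of the theorem --- it is neither proved, nor do your two suggested routes to it obviously succeed. Once $L=\lfloor\tfrac{k-1}{k+1}m\rfloor>m/2$, the congruence $y^{(c-s)}_j+y^{(c+s)}_j\equiv 2y^{(c)}_j \pmod m$ really can fail to lift in individual coordinates, and your fallback global counting argument does not close the gap. Indeed, writing $y^{(c-s)}_j+y^{(c+s)}_j=2y^{(c)}_j+m\eta_{s,j}$ with $\eta_{s,j}\in\{-1,0,1\}$ and using equality of first and second moments, one obtains
\begin{equation*}
\tfrac12\sum_j\bigl(y^{(c-s)}_j-y^{(c+s)}_j\bigr)^2+\tfrac{m^2}{2}\sum_j\eta_{s,j}^2 \;=\; -2m\sum_j\eta_{s,j}\,y^{(c)}_j ,
\end{equation*}
and the first-moment identity forces the number $N$ of coordinates with $\eta_{s,j}=-1$ to equal the number with $\eta_{s,j}=+1$. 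The left side is then at least $m^2N$, while the right side can be as large as $2mLN$; since $2L>m$ in the regime of the theorem, this is perfectly consistent with a wrapping progression and yields no contradiction. So convexity plus moment bookkeeping alone cannot finish the argument; some genuinely new structural input about how the $k$ points sit in the cyclic group $\Z_m$ is required, and that input is exactly what is missing from your plan.

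The paper supplies that input by a different and much more elementary device, using no moments or convexity at all. It considers the largest digit $d$ occurring in some \emph{non-constant} coordinate progression; since each digit occurs equally often in every vector $\mathbf{v}_i$, one may choose a coordinate $j_0$ in which this maximal $d$ occupies the middle position $\ell+1$, where $\ell=(k-1)/2$. The hypothesis $P^{-}(m)\geq (k+2)/2$ guarantees that the entries $\mathbf{v}_1^{(j_0)},\dots,\mathbf{v}_{\ell+1}^{(j_0)}$ are pairwise distinct, and by maximality of $d$ they all lie in $[0,d]$; the pigeonhole principle then produces two of them at positive distance at most $d/\ell<m/(\ell+1)\leq m-d$, and translating the middle term $d$ by this small positive difference shows that a later term of the same coordinate progression lies in $\{d+1,\dots,m-1\}$, contradicting the maximality of $d$. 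In short: your reduction to symmetric triples plus convexity is a reasonable opening move borrowed from the $k=3$ case, but as it stands the proposal reduces the theorem to an unproved lemma whose most natural proof attempts (as the computation above shows) fail, so it has a genuine gap.
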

For $k=2\ell \geq 4$ even and $m \equiv -1 \bmod k$ with $P^{-}(m) \geq (k+1)/2$, we can extend the set~$D$ found in the case $k=2\ell-1$ by one element.

\begin{theorem} \label{keven}
Let $k \geq 4$ be an even integer and let  $m$ be an integer with $m \equiv -1 \bmod k$. If $P^{-}(m) \geq k$ and $n$ is an integer divisible by $\lfloor \frac{k-2}{k}m \rfloor + 2$, then the set $S(D, n)$ with
\begin{equation*}
D=\set[\Big]{0,1,\dots, \Bigl\lfloor \frac{k-2}{k}m \Bigr\rfloor, \frac{(k-1)m-1}{k}}
\end{equation*}
does not contain any arithmetic progression of length~$k$.
\end{theorem}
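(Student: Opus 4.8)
The plan is to reduce the statement to the odd case (Theorem~\ref{kodd}) by peeling off the single extra digit $e := \frac{(k-1)m-1}{k}$. Throughout write $m = kt-1$, so that $t = \frac{m+1}{k}$ is an integer, the interval part of $D$ is $I := \set{0,1,\dots,\delta}$ with $\delta = \lfloor\frac{k-2}{k}m\rfloor = (k-2)t-1$, and $e = (k-1)t-1$; note $e\notin I$ because $e-\delta = t\ge 1$. Suppose for contradiction that $S(D,n)$ contains a progression $w^{(0)},\dots,w^{(k-1)}$ with common difference $d\neq 0$. Since each $w^{(j)}\in S(D,n)\subseteq D^{n}$, every coordinate $\ell$ yields a $k$-term progression $w^{(0)}_\ell, w^{(0)}_\ell+d_\ell,\dots$ lying entirely in $D$ (a ``column''), while membership in $S(D,n)$ says each ``row'' $w^{(j)}$ realises every digit of $D$ exactly $s:=n/\abs{D}$ times. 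As $1,\dots,k-1$ are coprime to $m$ (because $P^-(m)\ge k$), distinct terms of a column are distinct, so a column is constant iff $d_\ell=0$.

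The heart of the argument is the structural claim that in any nonconstant column the digit $e$ occurs only in an interior position $1,\dots,k-2$, never first or last. First I would recast ``$e$ is the first term'': writing $\rho\in\set{1,\dots,m-1}$ for the residue of $d_\ell$, the terms are $(e+j\rho)\bmod m$, and for $j\ge 1$ they must lie in $I$; using $e=(k-1)t-1$ a short computation turns this into the condition
\[
 j\rho \bmod m \in A := \set{t,\,t+1,\,\dots,\,(k-1)t-1}\qquad(j=1,\dots,k-1),
\]
whose complement is the symmetric interval $A^{c}=\set{-(t-1),\dots,-1,0,1,\dots,t-1}$ of radius $t-1$ about $0$. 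This is where the apparent difficulty dissolves. The $k$ points $0,\rho,2\rho,\dots,(k-1)\rho$ are distinct points of $\Z_m$ (again by coprimality), so two of them lie at cyclic distance at most $\lfloor m/k\rfloor = t-1$: otherwise the $k$ cyclic gaps would sum to at least $kt=m+1>m$, which is exactly where $m\equiv-1\bmod k$ enters. Their difference is $j'\rho$ for some $j'\in\set{1,\dots,k-1}$, so $j'\rho\bmod m\in A^{c}$, contradicting the displayed condition. Hence $e$ is not a first term, and by reversing the progression ($\rho\mapsto m-\rho$) it is not a last term either.

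With the claim in hand I would peel $e$ off by balancing. In rows $0$ and $k-1$ the digit $e$ is contributed only by constant-$e$ columns, so there are exactly $s$ of them; but then in every interior row the quota of $e$'s is already filled by these constant columns, forcing every nonconstant column to avoid $e$ altogether. Thus all columns containing $e$ are constant, $s$ in number, and the remaining $n':=n-s=(k-2)ts$ columns take values only in $I$, each digit of $I$ still appearing $s$ times per row.

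Finally I would invoke the odd case. The surviving columns, restricted to rows $0,\dots,k-2$, form a $(k-1)$-term progression in $S(\set{0,\dots,\delta},n')$ whose common difference is the restriction of $d$; this is nonzero, since $d\neq0$ vanishes on the constant-$e$ columns. Here $\set{0,\dots,\delta}$ is precisely the digit set prescribed for odd length $k-1$, as $\lfloor\frac{(k-1)-1}{(k-1)+1}m\rfloor=\delta$; moreover $P^-(m)\ge k\ge\frac{k+1}{2}$ and $(\delta+1)=(k-2)t$ divides $n'=(k-2)ts$. For $k\ge 6$ this contradicts Theorem~\ref{kodd}, and for $k=4$ it contradicts the $k=3$ construction of Elsholtz and Pach~\cite{elsholtz-pach:2019:progression-free-sets-caps}, completing the proof. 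The only genuinely delicate point is the structural claim, precisely because $D$ \emph{does} admit wrap-around progressions (for instance $2,7,1,6,0$ in $\Z_{11}$ when $k=6$), so naive interval reasoning fails; the reformulation above is exactly what lets Dirichlet's pigeonhole finish it cleanly.
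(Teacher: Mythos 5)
Your proof is correct and follows essentially the same route as the paper: the paper likewise reduces the theorem to showing that $h=\frac{(k-1)m-1}{k}$ cannot stand in the first position of a nonconstant column progression, proves this by the same pigeonhole/minimum-gap argument (phrased on the $k-1$ terms inside $[0,\lfloor\frac{k-2}{k}m\rfloor]$ rather than on the multiples $0,\rho,\dots,(k-1)\rho$ around the cycle, which is equivalent), and then falls back on Theorem~\ref{kodd} for the interval part. Your write-up simply makes explicit what the paper compresses into ``By Theorem~\ref{kodd} it suffices\dots'': the digit-balancing step that forces all columns containing $e$ to be constant, and the separate appeal to the $k=3$ result of Elsholtz and Pach~\cite{elsholtz-pach:2019:progression-free-sets-caps} when $k=4$.
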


From our computations (see~\ref{sec:appendix-1}) it seems likely that it should also be possible to extend the construction from Theorem \ref{kodd} to integers with $m \not \equiv -1 \bmod k$.
In particular, based on experiments with small primes, we have the following conjecture.
\begin{conjecture}
  Let $p \geq 13$ be a prime with $p \equiv 1 \bmod 4$ and let $n$ be an integer divisible by~$\frac{p+3}{2}$. Then the set
  \begin{equation*}
  D=\set[\Big]{ 0,1,\ldots,\frac{p-1}{2},\frac{p+3}{2}}
  \end{equation*}
  does not yield any arithmetic progression of length~4 in $S(D,n)$.
\end{conjecture}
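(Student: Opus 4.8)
The plan is to exploit the defining histogram symmetry of $S(D,n)$. Suppose $v,v+d,v+2d,v+3d\in S(D,n)$. Then each of the four vectors has exactly $n/\abs{D}$ coordinates equal to every digit of~$D$, so for an arbitrary function $f\colon D\to\R$ the quantity $\sum_{j=1}^{n} f\bigl((v+id)_j\bigr)$ does not depend on $i\in\set{0,1,2,3}$. Combining the four equal sums with the weights $(1,-1,-1,1)$, whose total is~$0$, gives
\[
\sum_{j=1}^{n}\Bigl(f(v_j)+f\bigl((v+3d)_j\bigr)-f\bigl((v+d)_j\bigr)-f\bigl((v+2d)_j\bigr)\Bigr)=0.
\]
It therefore suffices to find one weight $f\colon D\to\R$ such that the local expression $g(a_0,a_1,a_2,a_3):=f(a_0)+f(a_3)-f(a_1)-f(a_2)$ is strictly positive for every non-constant four-term progression $(a_0,a_1,a_2,a_3)$ lying in~$D$ (all progressions read in $\Z_p$). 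A coordinate with $d_j=0$ contributes $g=0$, so the existence of such an $f$ would force the displayed sum to be strictly positive as soon as some $d_j\neq0$, contradicting the identity; hence $d=0$ and there is no genuine progression.

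This reduces the theorem to a finite question about $D=\set{0,1,\dots,M}\cup\set{M+2}$, where $M=(p-1)/2$ and the single missing digit is $M+1=(p+1)/2$. I would first dispatch the progressions that do not wrap modulo $p$: for a genuine integer progression with difference $e$ the choice $f(x)=x^{2}$ (on integer representatives) gives $g=a_0^{2}+a_3^{2}-a_1^{2}-a_2^{2}=4e^{2}>0$, just as in the $k=3$ argument. The obstruction is that $D$ now reaches above $(p-1)/2$, so progressions can wrap around. A short computation shows that a wrapped progression must have common difference with $\abs{e}\in\set{M-1,M}$ after reduction to $(-p/2,p/2)$, its terms oscillating between a small band near $0$ and a large band near~$M$. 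These do occur: for $p=13$, where $D=\set{0,1,2,3,4,5,6,8}$, the tuple $(5,0,8,3)$ is a progression with difference $8$, and here $a_0^{2}+a_3^{2}-a_1^{2}-a_2^{2}=25+9-0-64<0$, so the quadratic weight fails and must be replaced.

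The heart of the proof is thus to design a weight that also dominates the wrapped progressions. I would (i) classify, uniformly in~$p$, all progressions in $D$ with $\abs{e}\in\set{M-1,M}$, recording the finitely many inequalities they impose on $f(0),\dots,f(M),f(M+2)$, and (ii) exhibit an explicit $f_p$ satisfying these together with the convexity-type inequalities from the genuine progressions. For each fixed $p$ this is a linear feasibility problem, and its solvability can be certified by Gordan's theorem; for $p=13$ one checks directly that suitably tuned values (with $f$ no longer equal to $x^{2}$) satisfy all the resulting constraints. The main difficulty is to make this uniform in~$p$: I expect one must perturb $x^{2}$ on the top digits $M-1,M,M+2$ and show that the perturbation beats every wrapped progression for all primes $p\equiv1\bmod 4$ with $p\ge13$ at once. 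This is exactly where the congruence hypothesis should enter, since it governs whether the neighbours $M+2\pm e$ of the extra digit fall into $D$ or into the gap at $M+1$; controlling this interaction between the lone missing digit and the wrapped progressions is the crux, and explains why the statement is, for now, only a conjecture.
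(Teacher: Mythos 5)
The statement you were asked to prove is stated in the paper as a \emph{conjecture}: the authors give no proof of it at all, only computational evidence for small primes obtained through the integer-programming framework of Section~\ref{sec:remarks} (the system~\eqref{eq:equations-for-progressions} and the reducibility test), summarized in Table~\ref{tab:special-values}. So there is no proof in the paper to compare yours against, and a complete argument would go beyond the paper. Your reduction itself is sound, and it is worth noting that it is exactly the linear-programming dual of the paper's own formulation: your weight $f$ with $f(a_0)+f(a_3)-f(a_1)-f(a_2)>0$ on all non-constant $4$-term progressions in $D$ is (via Gordan's theorem) a dual certificate for the emptiness of $\mathcal{P}(D)$, where the paper's ``reducibility'' is a primal-side certificate for the same fact. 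Your structural claims also check out: writing $M=(p-1)/2$, a wrap in a progression with difference $e\in[1,M]$ can only occur at the lone top digit $M+2$, since it requires a term $a_i\ge p-e\ge M+1$, and then the next term $M+2+e-p=e-M+1$ is non-negative only for $e\in\set{M-1,M}$; and your assertion for $p=13$ is true, e.g.\ the weights $f(0),\dots,f(6)=0,4,4,9,10,16,18$ and $f(8)=49/2$ satisfy all seven constraints (the four difference-$1$ progressions, $(0,2,4,6)$, $(2,4,6,8)$, and the wrapped $(5,0,8,3)$).

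The genuine gap is the step you yourself flag as ``the crux'': a construction of a feasible weight $f_p$ (or any other certificate) that works \emph{uniformly} for all primes $p\equiv 1\bmod 4$ with $p\ge 13$. Everything before that point---classifying wrapped progressions and solving a finite linear feasibility problem for each fixed $p$---is something the paper can already do prime by prime with a computer; the uniform argument is the entire content of the conjecture, and your proposal does not supply it, only a heuristic for where the congruence condition should enter. Two further cautions. First, your ansatz restricts the dual multipliers to the rank-one pattern $(1,-1,-1,1)$ tensored with a single function $f$; this is sufficient but not obviously necessary, so even if the restricted LP became infeasible for some large $p$, the conjecture could still hold, and conversely feasibility for every $p$ is not guaranteed by the conjecture's truth---your plan might be attacking a strictly stronger statement. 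Second, the interaction you need to control does not stop at the wrapped progressions: the non-wrapping progressions through the digit $M+2$ (such as $(2,4,6,8)$ for $p=13$) pull $f(M+2)$ upward while the wrapped ones pull it downward, and as my explicit $p=13$ solution shows, this already forces a delicate global deformation of the quadratic weight on all of $[0,M]$, not merely a perturbation ``on the top digits'' as you suggest. In short: correct framework, correct finite-case observations, but the proof is missing precisely where the difficulty lies.
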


Finally, Table~\ref{tab:special-values} provides explicit results for some values of~$p$ respectively~$k$. As the computational effort of finding large admissible digit sets grows for increasing~$p$ and~$k$ (see Section~\ref{sec:remarks}), the values of~$p$ and~$k$ given here are rather small. In particular, we list the size of the largest admissible digit set for each pair~$(p,k)$, or a lower bound for it if the existence of larger digit sets cannot be excluded.

As an example we give a detailed discussion of the case $p=11$ and $k=3$ in \ref{app:example}. A corresponding admissible digit set of maximal cardinality as well as the number of maximal admissible digit sets for each pair~$(p,k)$  can be found in~\ref{sec:appendix-1}.

\begin{table}[htbp]
    \centering
    \begin{tabular}{c|ccccccc}
        \diagbox[width=1cm, height=0.75cm]{$p$}{$k$} & 3 & 4$\ $ & 5$\ $ & 6$\ $ & 7$\ $ & 8$\ $ \\\hline
        5 & 3 & 3 \phantom{\gray{(5)}} & 4 \gray{(4)} & 5 \phantom{\gray{(5)}}  & 5 \phantom{\gray{(5)}} & 5 \phantom{\gray{(5)}}\\
        7 & 4 & 5 \gray{(5)} & 5 \gray{(5)} & 5 \phantom{\gray{(5)}} & 6 \gray{(6)} & 7 \phantom{\gray{(5)}}\\
        11 & 6 & 7 \gray{(7)} & 8 \gray{(8)} & 9 \gray{(9)} & 9 \gray{(9)} & 9 \phantom{\gray{(5)}}\\
        13 & 7 & 8 \phantom{\gray{(5)}} & 10 \gray{(9)} & 11 \phantom{\gray{(11)}} & 11 \gray{(10)} & 11 \phantom{\gray{(11)}}\\
        17 & 9 & 10 \phantom{\gray{(11)}} & 13 \gray{(12)} & 13 \gray{(13)} & 15 \gray{(13)} & 15 \phantom{\gray{(15)}}\\
        19 & 10 & 11 \gray{(11)} & 14 \gray{(13)} & 15 \phantom{\gray{(13)}} & 16 \gray{(15)} & 17 \phantom{\gray{(17)}}\\
        23 & $\geq 12$ & $\geq 13$ \gray{(13)} & $\geq 17$ \gray{(16)} & $18$ \gray{(17)} & $\geq 19$ \gray{(18)} & $\geq 20$ \gray{(19)}\\
        29 & $\geq 15$ & $\geq 17$ \phantom{\gray{(13)}} & $\geq 21$ \gray{(20)} & $\geq 22$ \gray{(21)} & $\geq 24$ \gray{(22)} & $\geq 25$ \phantom{\gray{(25)}}\\
        31 & $\geq 16$ & $\geq 18$ \gray{(17)} & $\geq 22$ \gray{(21)} & $\geq 23$ \phantom{\gray{(21)}} & $\geq 26$ \gray{(24)} & $\geq 26$ \gray{(25)}
\end{tabular}
    \caption{Maximal size of digit sets~$D$ modulo~$p$ such that $S(D,n)$ does not contain an arithmetic progression of length~$k$. The numbers given in parentheses are the bounds that we obtain from the general Theorems~\ref{keven} and~\ref{kodd}.}
    \label{tab:special-values}
\end{table}

\section{Proofs}
\label{sec:proofs}

\begin{proof}[Proof of Theorem~\ref{kodd}]
  Assume for the sake of a contradiction that $S(D,n)$ contains a non-constant arithmetic progression $\mathbf{v}_{1},\mathbf{v}_{2}$, \ldots, $\mathbf{v}_{k}$. Denote the $j$th coordinate of $\mathbf{v}_{i}$ by $\mathbf{v}_{i}^{(j)}$, i.e., $\mathbf{v}_{i}=(\mathbf{v}_{i}^{(1)},\mathbf{v}_{i}^{(2)}, \dots, \mathbf{v}_{i}^{(k)})$. Notice that for every $1 \leq j \leq n$ the elements $\mathbf{v}_{1}^{(j)},\mathbf{v}_{2}^{(j)},\dots, \mathbf{v}_{k}^{(j)}$ form an arithmetic progressions in~$\mathbb{Z}_{m}$.

  Let $d \leq \frac{m(k-1)}{k+1}$ be the largest integer such that $d$ occurs in a non-trivial progression $(\mathbf{v}_{i}^{(j_{0})})_{1 \leq i \leq k}$. As $d$ occurs equally often in every vector $\mathbf{v}_{i}$, we may choose $j=j_{0}$ such that $(\mathbf{v}_{i}^{(j_{0})})_{1 \leq i \leq k}$ is non-constant and $\mathbf{v}_{\ell+1}^{(j_{0})}=d$ with $\ell  =(k-1)/2$.

  By writing $\mathbf{v}_{i}^{(j_{0})} = \mathbf{v}_{1}^{(j_{0})}+(i-1)c$ for some non-zero element $c \in \mathbb{Z}_{m}$ we see that the elements $\mathbf{v}_{i}^{(j_{0})}$ with $1 \leq i \leq \ell +1$ are pairwise distinct since $P^{-}(m) > \ell$. By the maximality of $d$ we have $((\mathbf{v}_{i}^{(j_{0})})_{1 \leq i \leq k}) \subset [0,d]$. Thus, by the pigeonhole principle there exist indices $1 \leq i,i' \leq \ell +1$ such that
  \[
  	0<\mathbf{v}_{i}^{(j_{0})}-\mathbf{v}_{i'}^{(j_{0})} \leq \frac{d}{\ell} < \frac{m}{\ell +1},
  \] 
where the last inequality is strict since $P^{-}(m)>\ell +1$.

However, this implies that $\mathbf{v}_{\ell+i-i'}^{(j_{0})} \in \{d+1,\ldots,m-1 \}$, a contradiction.
\end{proof}

\begin{proof}[Proof of Theorem~\ref{keven}]
By Theorem 2.2 it suffices to show that a non-trivial arithmetic progression in $D$ cannot have $h:=\frac{(k-1)m-1}{k}$ in the first position. Assume that there is such a progression $v_{1},v_{2},\dots, v_{k}$ with $v_{1} = h$. Again, as $P^{-}(m) \geq k$ all elements $v_{1},v_{2},\dots, v_{k}$ are pairwise different. Note that the nearest elements in $D$ to $h$, namely the residue classes of $m$ (which is $0$) and $\lfloor \frac{(k-2)m}{k} \rfloor$, both have distance
\begin{equation}
\abs{m-h}=\abs[\Big]{h - \Bigl\lfloor \frac{(k-2)m}{k} \Bigr\rfloor} = \frac{m+1}{k}    \label{minDist}
\end{equation}
to $h$.

The elements $v_{2}$, $v_{3}$, \dots, $v_{k}$ are all different from $h$ and must therefore all lie in the interval $[0,\lfloor \frac{(k-2)m}{k} \rfloor]$. Thus, by the pigeon hole principle there are two elements $v_{i}$ and $v_{j}$ with $k \geq j>i \geq 2$ and distance $\abs{v_{i}-v_{j}}<\frac{m}{k}$. But this would mean that also $\abs{v_{1}-v_{j-i+1}}<\frac{m}{k}$, contradicting the minimal distance from $h$ to another element given in~\eqref{minDist}. Thus, there can be no such progression.
\end{proof}

\begin{proof}[Proof of Theorem~\ref{thm:main-1}]
By Theorem~\ref{kodd}  we can find a digit set~$D$ of size at least $m(k-1)/(k+1)$ such that there is no arithmetic progression of length~$k$ in~$S(D,n)$ for $n\in\N$ with $\abs{D}\mid n$. By~\eqref{eq:size-of-S}  we find
\begin{equation*}
    \abs{S(D,n)} 
    \gg_{m,k} \frac{\bigl\lfloor\frac{k-1}{k+1}m + 1\bigr\rfloor^{n}}{n^{\lfloor\frac{k-1}{k+1}m\rfloor/2}}.
\end{equation*}
If $\abs{D} \nmid n$, say $n = \abs{D}q+r$ and $1 \leq r < \abs{D}$, we can embed the set $S(D,n-r)$ into~$\mathbb{Z}_{m}^{n}$ by simply putting zeroes in the last $r$ coordinates. The image does not contain any arithmetic progressions and is also of size
\begin{equation*}
    \abs{S(D,n-r)} 
    \gg_{m,k} \frac{\bigl\lfloor \frac{k-1}{k+1}m + 1\bigr\rfloor^{n}}{n^{\lfloor\frac{k-1}{k+1}m\rfloor/2}},
\end{equation*}
as claimed.
\end{proof}

Theorem~\ref{thm:main-2} can be proven analogously, as a conclusion of Theorem~\ref{keven}.

\section{Finding Admissible Digit Sets}
\label{sec:remarks}

In this section, we present approaches to find admissible digit sets. In general, we are not able to strengthen the bounds given in Theorem~\ref{thm:main-1} and Theorem~\ref{thm:main-2}. However, we can improve the bounds for small primes by using computer power and applying the following method. See also Table~\ref{tab:special-values}.

Most of the ideas given in this section can also be found in~\cite{elsholtz-lipnik:2020:caps}, where the authors use similar techniques for capset constructions. 

\subsection{Modelling the Problem}
As already seen in the previous section, the described construction relies on finding large digit sets~$D\subseteq\Z_p$ such that $S(D,n)$ does not contain arithmetic progressions for all dimensions~$n\in\N$ with $\abs{D}\mid n$.
For this purpose, let $k\geq 3$ be the progression length and let~$p$ be a prime. Moreover, let
  $P_k(D)\subseteq D^k$
be the set of non-trivial $k$-term arithmetic progressions in~$D$.
Assume that there are $k$~points in~$S(D,n)$ which form an arithmetic progression for some $n\in\N$. For each progression~$v = (v_1,\ldots,v_k)\in P_k(D)$, let~$x_{v}$ be a variable which counts the occurrences of~$v$ in
the components of these~$k$~points. Due to the fact that each
digit~$d\in D$ has to occur the same number of time in each of the $k$~points, the equation
\begin{equation}
  \label{eq:equations-for-progressions}
  \sum_{\substack{v\in P_k(D)\\ v_{i} = d}}x_{v} = \sum_{\substack{v\in P_k(D)\\ v_{j} = d}}x_{v}
\end{equation}
has to hold for each digit~$d\in D$ and for any pair~$(i,j)$ with $1\leq i < j \leq k$.

It is easy to check that the non-existence
of a non-negative non-trivial integral solution
$(x_{v}\mid v\in P_k(D))$ of the system of equations given in~\eqref{eq:equations-for-progressions} is equivalent to
the non-existence of a $k$-term arithmetic progression in~$S(D,n)$.
Hence, if we want to prove the admissibility of some digit set~$D$, we have to
ensure that the set $\mathcal{P}(D) = \setm{x\in\Z_{\geq 0}^{\ell}}{Ax=0}$
only contains the zero vector, where the system of linear
equations~$Ax = 0$ describes the equations stated
in~\eqref{eq:equations-for-progressions}. If we want to show that a digit set is not admissible, on the other hand, then we have to find a non-negative non-trivial solution of $Ax = 0$. This solution directly corresponds to a $k$-term arithmetic progression in~$S(D,n)$, for infinitely many dimensions~$n$.

Both can be achieved by methods
of integer linear programming. Unfortunately, the problem of deciding if a
polyhedron contains an integral point is computationally hard and in general NP-complete~\cite{garey-johnson-computers-and-intractability}. This indicates
that checking this condition for all possible digit sets modulo~$p$
can only be done for small~$p$---and has been done for primes $5 \leq p \leq 31$ and progression length $3\leq k \leq 8$, as Table~\ref{tab:special-values} indicates.

\subsection{Reducibility as a Sufficient Condition}
\label{sec:matrix-red}
Next, we briefly describe a technique which allows showing admissibility in a computationally simple and comprehensible way.
For some fixed digit set~$D$ and the corresponding constraint matrix~$A$ as mentioned above, let~$B$ be a fixed matrix which is equivalent to~$A$ in the sense that there exists an invertible matrix~$T$ such that $TA = B$. This certainly implies $\setm{x\in\Z_{\geq 0}^{\ell}}{Ax=0} = \mathcal{P}(D) = \setm{x\in\Z_{\geq 0}^{\ell}}{Bx=0}$.

Remember that we want to show the emptiness of $\mathcal{P}(D)$.
Therefore, if some non-zero row~$i$ of~$B$ only contains non-negative or non-positive entries, then it clearly follows that the variables corresponding to non-zero entries of this row have to be zero. This is because we are looking for non-negative solutions~$x$ of $B x = 0$, and if the said variables were non-zero, then the equation corresponding to row~$i$ of~$B$ would not have such a solution.

Consequently, we remove those columns of~$B$ which belong to these variables, i.e., columns of~$B$ with non-zero entry in row~$i$, and then proceed with the remaining matrix and the next non-negative or non-positive row. The deletion of columns possibly brings out new non-negative or non-positive rows. The process determines if no non-negative or non-positive non-zero row is left in the matrix. If at the end all columns of~$B$ are deleted---which means that all variables~$x_i$ have to be zero and that this is the only non-negative integral solution---, then the digit set~$D$ is admissible.

Two very natural choices\footnote{Note the approach of reducing the set~$P_k(D)$ due to certain conditions on the occurring digits which is used in~\cite{elsholtz-lipnik:2020:caps, elsholtz-pach:2019:progression-free-sets-caps} is a special case of the reducibility presented here, namely with initial matrix~$B = A$. For further details we refer to the mentioned papers.} for the initial matrix~$B$ are $B = A$ and $B = A_{\mathrm{ech}}$, where $A_{\mathrm{ech}}$ denotes the reduced row echelon form of~$A$. It turns out that these choices are not only intuitive but also very successful and good enough for our purpose: We were able to verify 50 of the 54 bounds given in Table~\ref{tab:special-values} using them; see~\ref{sec:appendix-1}. One comprehensible example with a different choice of~$B$ can be found in \ref{app:example}.

\section*{Data Availability and Acknowledgements}

All datasets which are generated and analysed during the current study and which are not included in this work are available from the corresponding author on reasonable request.

The authors acknowledge the support of the Austrian Science Fund (FWF): W\,1230 and I\,4945-N. Moreover, the authors thank the referees for their helpful comments.

\bibliography{literature}
\bibliographystyle{amsplainurl}

\begin{appendix}
\gdef\thesection{Appendix \Alph{section}}

\section{Illustration of Reducibility}
\label{app:example}
In the following, we exemplarily illustrate the concept of reducibility. For this purpose, let us have a look at the modulus $p = 11$ and the progression length $k=3$.

We show that the digit set $D_1 = \set{0, 1, 2, 3, 4, 5}$ is admissible (even though this is already known from~\cite{elsholtz-pach:2019:progression-free-sets-caps}) by deducing its reducibility with reduced row echelon form as initial matrix. The set~$P_3(D_1)$ of non-trivial $3$-term arithmetic progressions in~$D_1$ is given by
\begin{align*}
   P_3(D_1) = \bigl\{&(0, 1, 2),
 (0, 2, 4),
 (1, 2, 3),
 (1, 3, 5),
 (2, 3, 4),
 (2, 1, 0),\\
 &(3, 4, 5),
 (3, 2, 1),
 (4, 2, 0),
 (4, 3, 2),
 (5, 3, 1),
 (5, 4, 3)\bigr\},
\end{align*}
and thus, the constraint matrix~$A$ is given by
\begin{equation*}
    A = \left(\begin{smallmatrix}
1 & 1 & 0 & 0 & 0 & 0 & 0 & 0 & 0 & 0 & 0 & 0 \\
-1 & 0 & 1 & 1 & 0 & -1 & 0 & 0 & 0 & 0 & 0 & 0 \\
0 & -1 & -1 & 0 & 1 & 1 & 0 & -1 & -1 & 0 & 0 & 0 \\
0 & 0 & 0 & -1 & -1 & 0 & 1 & 1 & 0 & -1 & -1 & 0 \\
0 & 0 & 0 & 0 & 0 & 0 & -1 & 0 & 1 & 1 & 0 & -1 \\
0 & 0 & 0 & 0 & 0 & 0 & 0 & 0 & 0 & 0 & 1 & 1 \\
1 & 1 & 0 & 0 & 0 & -1 & 0 & 0 & -1 & 0 & 0 & 0 \\
0 & 0 & 1 & 1 & 0 & 0 & 0 & -1 & 0 & 0 & -1 & 0 \\
-1 & 0 & 0 & 0 & 1 & 1 & 0 & 0 & 0 & -1 & 0 & 0 \\
0 & 0 & -1 & 0 & 0 & 0 & 1 & 1 & 0 & 0 & 0 & -1 \\
0 & -1 & 0 & 0 & -1 & 0 & 0 & 0 & 1 & 1 & 0 & 0 \\
0 & 0 & 0 & -1 & 0 & 0 & -1 & 0 & 0 & 0 & 1 & 1
\end{smallmatrix}\right),
\end{equation*}
where the first six rows represent equations which arise from the first and second position in the vectors of~$P_3(D_1)$ (i.e., $i=1$ and $j=2$ in~\eqref{eq:equations-for-progressions}), and the latter six rows represent the constraints for the positions one and three in the vectors of~$P_3(D_1)$ (i.e., $i=1$ and $j=3$ in~\eqref{eq:equations-for-progressions}). 
Moreover, its reduced row echelon form is given by
\begin{equation*}
    A_{\mathrm{ech}} = \left(\begin{smallmatrix}
1 & 0 & 0 & 0 & 0 & 0 & 0 & -1 & -1 & 0 & 0 & 1 \\
0 & 1 & 0 & 0 & 0 & 0 & 0 & 1 & 1 & 0 & 0 & -1 \\
0 & 0 & 1 & 0 & 0 & 0 & 0 & -1 & -1 & -1 & 0 & 2 \\
0 & 0 & 0 & 1 & 0 & 0 & 0 & 0 & 1 & 1 & 0 & -1 \\
0 & 0 & 0 & 0 & 1 & 0 & 0 & -1 & -2 & -1 & 0 & 1 \\
0 & 0 & 0 & 0 & 0 & 1 & 0 & 0 & 1 & 0 & 0 & 0 \\
0 & 0 & 0 & 0 & 0 & 0 & 1 & 0 & -1 & -1 & 0 & 1 \\
0 & 0 & 0 & 0 & 0 & 0 & 0 & 0 & 0 & 0 & 1 & 1 \\
0 & 0 & 0 & 0 & 0 & 0 & 0 & 0 & 0 & 0 & 0 & 0 \\
0 & 0 & 0 & 0 & 0 & 0 & 0 & 0 & 0 & 0 & 0 & 0 \\
0 & 0 & 0 & 0 & 0 & 0 & 0 & 0 & 0 & 0 & 0 & 0 \\
0 & 0 & 0 & 0 & 0 & 0 & 0 & 0 & 0 & 0 & 0 & 0
\end{smallmatrix}\right).
\end{equation*}
 The described matrix reduction is given as follows, where the non-negative respectively non-positive rows as well as the columns which have to be deleted in the next step are marked:
\begin{align*}
\left(\begin{smallmatrix}
1 & 0 & 0 & 0 & 0 & \tikzmark{o1}0 & 0 & -1 & \tikzmark{o2}-1 & 0 & \tikzmark{o3}0 & \tikzmark{o4}1 \\
0 & 1 & 0 & 0 & 0 & 0 & 0 & 1 & 1 & 0 & 0 & -1 \\
0 & 0 & 1 & 0 & 0 & 0 & 0 & -1 & -1 & -1 & 0 & 2 \\
0 & 0 & 0 & 1 & 0 & 0 & 0 & 0 & 1 & 1 & 0 & -1 \\
0 & 0 & 0 & 0 & 1 & 0 & 0 & -1 & -2 & -1 & 0 & 1 \\
\tikzmark{l1}0 & 0 & 0 & 0 & 0 & 1 & 0 & 0 & 1 & 0 & 0 & 0\tikzmark{r1} \\
0 & 0 & 0 & 0 & 0 & 0 & 1 & 0 & -1 & -1 & 0 & 1 \\
\tikzmark{l2}0 & 0 & 0 & 0 & 0 & 0 & 0 & 0 & 0 & 0 & 1 & 1\tikzmark{r2} \\
0 & 0 & 0 & 0 & 0 & 0 & 0 & 0 & 0 & 0 & 0 & 0 \\
0 & 0 & 0 & 0 & 0 & 0 & 0 & 0 & 0 & 0 & 0 & 0 \\
0 & 0 & 0 & 0 & 0 & 0 & 0 & 0 & 0 & 0 & 0 & 0 \\
0 & 0 & 0 & 0 & 0 & \tikzmark{u1}0 & 0 & 0 & \tikzmark{u2}0 & 0 & \tikzmark{u3}0 & \tikzmark{u4}0
\end{smallmatrix}\right)
\DrawBox[ForestGreen, thick]{l1}{r1}
\DrawBox[ForestGreen, thick]{l2}{r2}
\DrawLine[Maroon, thick]{o1}{u1}
\DrawLineCorr[Maroon, thick]{o2}{u2}
\DrawLine[Maroon, thick]{o3}{u3}
\DrawLine[Maroon, thick]{o4}{u4}
\leadsto
\left(\begin{smallmatrix}
1 & \tikzmark{o5}0 & 0 & \tikzmark{o6}0 & 0 & 0 & \tikzmark{o7}-1 & \tikzmark{o8}0 \\
\tikzmark{l5}0 & 1 & 0 & 0 & 0 & 0 & 1 & 0\tikzmark{r5}\\
0 & 0 & 1 & 0 & 0 & 0 & -1 & -1\\
\tikzmark{l6}0 & 0 & 0 & 1 & 0 & 0 & 0  & 1\tikzmark{r6}\\
0 & 0 & 0 & 0 & 1 & 0 & -1 & -1\\
0 & 0 & 0 & 0 & 0 & 0 & 0 & 0\\
0 & 0 & 0 & 0 & 0 & 1 & 0 & -1\\
0 & 0 & 0 & 0 & 0 & 0 & 0 & 0\\
0 & 0 & 0 & 0 & 0 & 0 & 0 & 0\\
0 & 0 & 0 & 0 & 0 & 0 & 0 & 0\\
0 & 0 & 0 & 0 & 0 & 0 & 0 & 0\\
0 & \tikzmark{u5}0 & 0 & \tikzmark{u6}0 & 0 & 0 & \tikzmark{u7}0 & \tikzmark{u8}0
\end{smallmatrix}\right)
\DrawBox[ForestGreen, thick]{l5}{r5}
\DrawBox[ForestGreen, thick]{l6}{r6}
\DrawLine[Maroon, thick]{o5}{u5}
\DrawLine[Maroon, thick]{o6}{u6}
\DrawLineCorr[Maroon, thick]{o7}{u7}
\DrawLine[Maroon, thick]{o8}{u8}
\leadsto
\left(\begin{smallmatrix}
1 & 0 & 0 & 0\\
0 & 0 & 0 & 0\\
0 & 1 & 0 & 0\\
0 & 0 & 0 & 0\\
0 & 0 & 1 & 0\\
0 & 0 & 0 & 0\\
0 & 0 & 0 & 1\\
0 & 0 & 0 & 0\\
0 & 0 & 0 & 0\\
0 & 0 & 0 & 0\\
0 & 0 & 0 & 0\\
0 & 0 & 0 & 0
\end{smallmatrix}\right)
\leadsto
\left(\begin{smallmatrix}
\ \\
\ \\
\ \\
\ \\
\ \\
\ \\
\ \\
\ \\
\ \\
\ \\
\ \\
\ \\
\ \\
\
\end{smallmatrix}\right).
\end{align*}
The last step trivially follows. As a consequence, the only non-negative vector~$x$ that solves $Ax = 0$ is the zero vector. This implies that $D_1$ is reducible and thus, also admissible.

\section{Admissible Digit Sets (Verification of Table~\ref{tab:special-values})}
\label{sec:appendix-1}

In the following, we list one maximal admissible set for each pair $(p, k)$ with $5 \leq p \leq 31$ and $3 \leq k \leq 8$. Admissibility was checked via reducibility as presented in Section~\ref{sec:remarks}; we list the lexicographically first admissible digit set which is reducible with initial matrix~$A$ or~$A_{\mathrm{ech}}$, where this is possible. We also give the initial matrices with which we have established reducibility of the corresponding digit sets.

Moreover, for small primes we also give the number of maximal admissible digit sets. This result was obtained by the computational integer programming approach. Note that many admissible digit sets are in some sense symmetric to each other. We have refrained from filtering out such patterns because the given number should only convey a sense for its range.
To keep the following tables concise, we use the usual notation for discrete intervals, i.e.,
\begin{equation*}
    [a,b] \coloneqq \setm{x\in\Z}{a\leq x\leq b}
\end{equation*}
for integers $a$ and $b$ with $0\leq a \leq b <p$, and we consider these sets to be subsets of~$\Z_p$.

The admissibility of the four digit sets marked with a star (*) has been checked by using the integer programming approach which is described in Section~\ref{sec:matrix-red}, because no reducible digit set has been found of the same size, neither with initial matrix~$A$ nor with initial matrix~$A_\mathrm{ech}$. (Numerous digit sets with one element less are reducible with initial matrix~$A$ respectively~$A_\mathrm{ech}$, though.) 

\begin{table}[htbp]
    \centering
    \begin{tabular}{c|c|c|c}
        \makecell{$p$}  & \makecell{one maximal\\ admissible digit set} & \makecell{initial\\ matrix $B$} & \makecell{number of maximal\\ admissible digit sets}\\\hline
        5 & $[0,2]$ & $A$, $A_{\mathrm{ech}}$ & 10\\
        7 & $[0,3]$ & $A$, $A_{\mathrm{ech}}$ & 35\\
        11 & $[0,5]$ & $A$, $A_{\mathrm{ech}}$ & 275\\
        13 & $[0,6]$ & $A$ & 546\\
        17 & $[0,8]$ & $A$ & 1496\\
        19 & $[0,9]$ & $A$ & 2223\\
        23 & $[0,11]$ & $A$ & 4301\\
        29 & $[0,14]$ & $A$ & \\
        31 & $[0,15]$ & $A$ &\\
    \end{tabular}
    \caption{Progression length~$k=3$ (see also~\cite{elsholtz-pach:2019:progression-free-sets-caps})}
    \label{tab:k-3}
\end{table}

\begin{table}[htbp]
    \centering
    \begin{tabular}{c|c|c|c}
        \makecell{$p$}  & \makecell{one maximal\\ admissible digit set} & \makecell{initial\\ matrix $B$} & \makecell{number of maximal\\ admissible digit sets}\\\hline
        5 & $[0,2]$& $A$, $A_{\mathrm{ech}}$ & 10\\
        7 & $[0,4]$ & $A$, $A_{\mathrm{ech}}$ & 21\\
        11 & $[0,6]$ & $A_{\mathrm{ech}}$ & 220\\
        13 & $[0,6]\cup\set{8}$ & $A_{\mathrm{ech}}$ & 468\\
        17 & $[0,8]\cup\set{10}$ & $A_{\mathrm{ech}}$ & 5848\\
        19 & $[0,10]$ & $A_{\mathrm{ech}}$ & 16416\\
        23 & $[0,12]$ & $A_{\mathrm{ech}}$ & \\
        29 & $[0,12]\cup\set{14,25,27,28}$ & $A_{\mathrm{ech}}$ & \\
        31 & $[0,12]\cup\set{14,16,27,29,30}$ & $A_{\mathrm{ech}}$ & \\
    \end{tabular}
    \caption{Progression length~$k=4$}
    \label{tab:k-4}
\end{table}

\begin{table}[htbp]
    \centering
    \begin{tabular}{c|c|c|c}
        \makecell{$p$}  & \makecell{one maximal\\ admissible digit set} & \makecell{initial\\ matrix $B$} & \makecell{number of maximal\\ admissible digit sets}\\\hline
        5 & $[0,3]$ & $A$, $A_{\mathrm{ech}}$ & 5\\
        7 & $[0,4]$ & $A$, $A_{\mathrm{ech}}$ & 21\\
        11 & $[0,7]$ & $A$, $A_{\mathrm{ech}}$ & 165\\
        13 & $[0,9]$ & $A_{\mathrm{ech}}$ & 286\\
        17 & $[0,9]\cup[11,13]$ & $A$ & 1768\\
        19 & $[0,13]$ & $A_{\mathrm{ech}}$ & 10089\\
        23 & $[0,12]\cup[14,16]\cup\set{18}$ & $A$ & \\
        29 & $[0,15]\cup[17,20]\cup\set{26}$ & $A$ & \\
        31 & $[0,17]\cup\set{19,20,26,29}$ & $A$ & \\
    \end{tabular}
    \caption{Progression length~$k=5$}
    \label{tab:k-5}
\end{table}
\begin{table}[htbp]
    \centering
    \begin{tabular}{c|c|c|c}
        \makecell{$p$}  & \makecell{one maximal\\ admissible digit set} & \makecell{initial\\ matrix $B$} & \makecell{number of maximal\\ admissible digit sets}\\\hline
        5 & $[0,4]$ & $A$, $A_{\mathrm{ech}}$ & 1\\
        7 & $[0,4]$ & $A$, $A_{\mathrm{ech}}$ & 21\\
        11 & $[0,8]$ & $A$, $A_{\mathrm{ech}}$ & 55\\
        13 & $[0,10]$ & $A_{\mathrm{ech}}$ & 78\\
        17 & $[0,12]$ & $A_{\mathrm{ech}}$ & 2312 \\
        19 & $[0,14]$ & $A_{\mathrm{ech}}$ & 2052\\
        23 & $[0,13]\cup\set{15,19,21,22}$ & $A_{\mathrm{ech}}$ & 23529\\
        29 & $[0,15]\cup\set{17,18,23}\cup[25,17]$ & $A_{\mathrm{ech}}$ & \\
        31 & $[0,18]\cup\set{20,26,29,30}$ & $A_{\mathrm{ech}}$ & \\
    \end{tabular}
    \caption{Progression length~$k=6$}
    \label{tab:k-6}
\end{table}

\begin{table}[htbp]
    \centering
    \begin{tabular}{c|c|c|c}
        \makecell{$p$}  & \makecell{one maximal\\ admissible digit set} & \makecell{initial\\ matrix $B$} & \makecell{number of maximal\\ admissible digit sets}\\\hline
        5 & $[0,4]$ & $A$, $A_{\mathrm{ech}}$ & 1 \\
        7 & $[0,5]$ & $A$, $A_{\mathrm{ech}}$ & 7\\
        11 & $[0,8]$ & $A$, $A_{\mathrm{ech}}$ & 55\\
        13 & $[0,10]$ & $A$, $A_{\mathrm{ech}}$ & 78\\
        17 & $[0,14]^*$ &  & 136\\
        19 & $[0,15]$ & $A_{\mathrm{ech}}$ & 969\\
        23 & $[0,18]$ & $A_{\mathrm{ech}}$ & \\
        29 & $[0,23]^*$ &  & \\
        31 & $[0,25]^*$ &  & \\
    \end{tabular}
    \caption{Progression length~$k=7$}
    \label{tab:k-7}
\end{table}

\begin{table}[htbp]
    \centering
    \begin{tabular}{c|c|c|c}
        \makecell{$p$}  & \makecell{one maximal\\ admissible digit set} & \makecell{initial\\ matrix $B$} & \makecell{number of maximal\\ admissible digit sets}\\\hline
        5 & $[0,4]$ & $A$, $A_{\mathrm{ech}}$ & 1\\
        7 & $[0,6]$ & $A$, $A_{\mathrm{ech}}$ & 1\\
        11 & $[0,8]$ & $A$, $A_{\mathrm{ech}}$ & 55\\
        13 & $[0,10]$ & $A$, $A_{\mathrm{ech}}$ & 78 \\
        17 & $[0,14]$ & $A_{\mathrm{ech}}$ & 136\\
        19 & $[0,14]\cup\set{16,17}$ & $A_{\mathrm{ech}}$ & 171\\
        23 & $[0,15]\cup\set{18, 19, 21, 22}$ & $A_{\mathrm{ech}}$ & 1771\\
        29 & $[0,24]^*$ &  & \\
        31 & $[0,19]\cup\set{22,24,25}\cup[28,30]$ & $A$ & \\
    \end{tabular}
    \caption{Progression length~$k=8$}
    \label{tab:k-8}
\end{table}

\end{appendix}

\end{document}